\newtheorem{Thm}{Theorem}
\newtheorem{Lem}{Lemma}
\theoremstyle{definition}
\newtheorem{Rem}{Remark}
\begin{document}

\title{On semilinear elliptic equations with global coupling}
\author{Shinji Kawano  \\ Department of Mathematics \\ Hokkaido University \\ Sapporo  060-0810, Japan}
\date{}
\maketitle

\begin{abstract}

We consider the problem
\begin{equation*}
 \begin{cases}
   \triangle A- A+A^p-kA\int_{\mathbb{R}^n} A^2dx=0  & \text{in  $\mathbb{R}^n$},\\
   \displaystyle \lim_{\lvert x \rvert \to \infty} A(x)  =0, 
 \end{cases}  
\end{equation*}
where $p>1,k>0$ are constants. We classify the existence of all possible positive solutions to this problem.
\end{abstract}

\section{Introduction}
Equations with global coupling arise in the study of pattern formation in various fields of science such as fluid mechanics as well as
chemistry or biology. Especially Riecke~\cite{Riecke} has suggested the following equation:
\begin{equation}
A_t=\triangle A + \mu A + c \lvert A \rvert ^2 A- \lvert A \rvert ^4 A -k\int_{\mathbb{R}^n} \lvert A \rvert ^2 dx, \label{original}
 \end{equation}
where A is a complex-valued function defined on $\mathbb{R}^n \times (0,\infty)$, $k>0$ and $c,\mu$ are real numbers.

We are interested in positive solutions. This means that we replace \eqref{original} by the following equation:
\begin{equation}
A_t=\triangle A + \mu A + cA^3-A^5-kA\int_{\mathbb{R}^n} A^2 dx, \label{time}
\end{equation}
where A is a positive function defined on $\mathbb{R}^n \times (0,\infty)$.

Wei and Winter~\cite{WW} studied this eequation \eqref{time}, together with the steady-state solutions of \eqref{time} which satisfy
\begin{equation}
\triangle A+ \mu A + cA^3-A^5-kA\int_{\mathbb{R}^n} A^2 dx=0. \label{positive}
\end{equation}
They have classified the existence of positive solutions to \eqref{positive} and studied the stability of all possive standing wave solutions. For other researches for equations concerned with pattern formation, consult Matthews and Cox~\cite{NON1} and 
Norbury, Wei and Winter~\cite{NON2}.

In this present paper, we shall consider the following problem 
\begin{equation}
 \begin{cases}
   \triangle A- A+A^p-kA\displaystyle \int_{\mathbb{R}^n} A^2dx=0  & \text{in  $\mathbb{R}^n$},\\
   \displaystyle \lim_{\lvert x \rvert \to \infty} A(x)  =0,  \qquad  A(0)=\max_{x\in \mathbb{R}^n}A(x),     \label{a}
 \end{cases}  
\end{equation}
where A is a positive-valued function defined on $\mathbb{R}^n \times (0,\infty)$ and $p>1,k>0$ are constants, 
$n$ is the space dimension $n\ge 1$, integer.  

This equation is from \eqref{positive}, modified and generalized to seize mathematical structures.
We shall find classical positive solutions to \eqref{a}, using the uniqueness result to single power nonlinear equation given by Kwong~\cite{K}.

Following is the main result of the present paper:
\begin{Thm}  
For any $p\in (0,p^*(n))$, the problem \eqref{a} has a positive radially symmetric solution if $k>0$ is sufficiently small.

For any $p\in [p^*(n),\infty)$, the problem \eqref{a} does not have any solutions for any $k>0$.

Here, $p^*(n)$ is the exponent called Sobolev exponent defined by the following formula:
\begin{equation*}
p^*(n)=\begin{cases} 
         \infty            \qquad n=1,2.    \\
         \dfrac{n+2}{n-2}  \qquad  n\ge 3.
     \end{cases}
\end{equation*} 
\end{Thm}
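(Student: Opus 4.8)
The plan is to exploit the observation that, for any solution $A$, the integral $m := \int_{\mathbb{R}^n} A^2\,dx$ is simply a nonnegative constant, so that the nonlocal problem \eqref{a} collapses to a local single-power equation. Setting $\lambda := 1 + km$, any solution of \eqref{a} must solve
\begin{equation*}
\triangle A - \lambda A + A^p = 0 \quad\text{in }\mathbb{R}^n, \qquad \lim_{|x|\to\infty} A(x) = 0,
\end{equation*}
together with the self-consistency constraint that $m$ coincides with $\int A^2$ computed from this very $A$. Thus the problem decouples into (i) solving the local equation for each admissible $\lambda>1$, and (ii) closing the loop through a single scalar equation for $\lambda$.

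For the nonexistence half ($p \ge p^*(n)$, so $n\ge 3$) I would invoke the Pohozaev identity for the local equation. Multiplying by $A$ and integrating gives $\int|\nabla A|^2 = \int A^{p+1} - \lambda\int A^2$, while multiplying by $x\cdot\nabla A$ and integrating gives the Pohozaev relation. Eliminating $\int|\nabla A|^2$ leaves an identity of the form $\bigl(\tfrac{n-2}{2}-\tfrac{n}{p+1}\bigr)\int A^{p+1} = -\lambda\int A^2$. For $p\ge p^*(n)=\tfrac{n+2}{n-2}$ the left coefficient is nonnegative while the right side is strictly negative for a nontrivial $A$ (as $\lambda>0$), a contradiction. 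Since any solution of \eqref{a} satisfies the local equation with $\lambda = 1+km>0$, no positive decaying solution can exist for any $k>0$.

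For the existence half ($1 < p < p^*(n)$) I would begin from Kwong's theorem, which furnishes the unique positive radial decaying ground state $w$ of $\triangle w - w + w^p = 0$, radially decreasing so that $w(0)=\max w$. The two-parameter rescaling $A_\lambda(x) = \alpha\,w(\beta x)$ solves the $\lambda$-equation exactly when $\beta=\sqrt\lambda$ and $\alpha=\lambda^{1/(p-1)}$, and it inherits the radial monotonicity, so $A_\lambda(0)=\max A_\lambda$ as required. A change of variables then yields the explicit dependence
\begin{equation*}
m(\lambda) = \int_{\mathbb{R}^n} A_\lambda^2\,dx = M\,\lambda^{\frac{2}{p-1}-\frac{n}{2}}, \qquad M := \int_{\mathbb{R}^n} w^2\,dx,
\end{equation*}
with $M$ depending only on $n,p$.

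The crux, and the step I expect to be the main obstacle, is closing the self-consistency loop: one must produce $\lambda>1$ solving the scalar equation $\lambda = 1 + kM\lambda^{\gamma}$ with $\gamma = \tfrac{2}{p-1}-\tfrac{n}{2}$. I would treat the right-hand side as a map $g(\lambda)=1+kM\lambda^{\gamma}$ on a fixed compact interval such as $[1,2]$. For $k$ small, $g$ maps $[1,2]$ into itself and is a contraction, since its derivative $kM\gamma\lambda^{\gamma-1}$ is uniformly small there; Banach's fixed point theorem then yields a unique $\lambda_k\in[1,2]$, which satisfies $\lambda_k=g(\lambda_k)>1$ because $g>1$ everywhere. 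Taking $A=A_{\lambda_k}$ produces the desired positive radial solution of \eqref{a}. The delicate point is arranging the self-mapping and contraction bounds to hold uniformly and independently of the sign of $\gamma$; this is precisely where the smallness of $k$ is consumed.
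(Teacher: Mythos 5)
Your proposal is correct and follows essentially the same route as the paper: you observe that the nonlocal term is a constant, so every solution of \eqref{a} solves the local equation $\triangle A-\lambda A+A^p=0$ and is therefore the Kwong ground state rescaled, $A(x)=\lambda^{1/(p-1)}w(\sqrt{\lambda}\,x)$; nonexistence for $p\ge p^*(n)$ comes from Pohozaev's identity for the local equation (the paper cites this, you rederive it, with the correct sign bookkeeping); and existence reduces to the scalar consistency equation $\lambda=1+kM\lambda^{\gamma}$, $\gamma=\frac{2}{p-1}-\frac{n}{2}$. The one place you diverge is in how that scalar equation is solved: you run a Banach fixed point argument for $g(\lambda)=1+kM\lambda^{\gamma}$ on the compact interval $[1,2]$, where $\lambda^{\gamma}$ and $\lambda^{\gamma-1}$ are bounded regardless of the sign of $\gamma$, so smallness of $k$ gives both the self-mapping and the contraction property; the paper instead rewrites the equation as $f(\omega):=(\omega-1)\omega^{-\gamma}=k\alpha$ and analyzes the graph of $f$ on all of $(0,\infty)$, splitting into five cases according to the sign and size of $\frac{n}{2}-\frac{2}{p-1}+1$. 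Your contraction argument is shorter and fully suffices for the stated theorem (existence for small $k$, plus local uniqueness of $\lambda$ in $[1,2]$); the paper's global graph analysis costs a case distinction but buys its sharper Theorem~2: exact solution counts (zero, one, or two) for \emph{every} $k>0$, including the threshold value $k=f(\omega_{n,p})/\alpha$ below which two solutions coexist when $1<p<1+\frac{4}{n+2}$. That multiplicity information is invisible to a fixed-point argument confined to a neighborhood of $\lambda=1$.
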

In the next section we present the proof of the theorem and more detailed formulation of the theorem.
We note here that the method is essentially based on Wei and Winter~\cite{WW}.

\section{Proof of the theorem}
First we state the key lemma for the proof.
\begin{Lem}
Consider the problem 
\begin{equation}
 \begin{cases}
  \triangle A -\omega A + A^p=0,   \\
  \displaystyle \lim_{\lvert x \rvert \to \infty} A(x)  =0,  \qquad  A(0)=\max_{x\in \mathbb{R}^n}A(x),     \label{omega}
 \end{cases}
\end{equation}
where $p>1$ and $\omega>0$.

1. If~~$1<p<p^*(n)$,
the problem \eqref{omega} posesses the unique positive radially symmetric classical solution for each $\omega>0$.

Moreover, the solution $A$ is given by the following formula:
\begin{equation}
 A(x)=\omega^{\frac{1}{p-1}} A_0(\sqrt{\omega}x), \label{rep}
\end{equation}
where $A_0$ is the unique solution of the following problem:
\begin{equation}
 \begin{cases}
  \triangle A -A + A^p=0,   \\
  \displaystyle \lim_{\lvert x \rvert \to \infty} A(x)  =0,  \qquad  A(0)=\max_{x\in \mathbb{R}^n}A(x),  \label{1}
 \end{cases}
\end{equation}
(This problem \eqref{1} is the problem \eqref{omega} with $\omega=1$.)

2. If~~$p^*(n)\le p<\infty$
the problem \eqref{omega} does not have any positive solutions.

3. The function $A_0$ has exponential decay at infinity:
\begin{equation*}
\lvert A_0(x) \rvert \le Ce^{-\delta \lvert x \rvert}, \qquad x \in \mathbb{R}^n,
\end{equation*}
for some $C,\delta>0$.    \label{lem}
\end{Lem}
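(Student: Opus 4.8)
The plan is to prove the three parts in the order 1, 3, 2, because the exponential decay of part~3 is exactly what is needed to make the integrations by parts in the non-existence argument of part~2 legitimate. For part~1, I would first dispose of the normalized problem \eqref{1} ($\omega=1$): existence of a positive solution is classical for the single-power equation (e.g. by constrained minimization), radial symmetry of every positive solution tending to $0$ at infinity comes from the moving-plane method, and the decisive uniqueness statement is imported directly from Kwong~\cite{K}; write $A_0$ for this unique solution. To reach general $\omega>0$ I would substitute the ansatz $A(x)=\lambda B(\mu x)$ into \eqref{omega}. A direct computation gives $\triangle A-\omega A+A^p=\lambda\mu^2(\triangle B)(\mu x)-\lambda\omega B(\mu x)+\lambda^p B(\mu x)^p$, which reduces to a positive multiple of $\triangle B-B+B^p$ precisely when $\mu=\sqrt{\omega}$ and $\lambda^{p-1}=\mu^2=\omega$, i.e. $\lambda=\omega^{1/(p-1)}$. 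Thus $A\mapsto \omega^{-1/(p-1)}A(\,\cdot\,/\sqrt{\omega})$ is a bijection from solutions of \eqref{omega} to solutions of \eqref{1}; since it preserves positivity, radial symmetry, the normalization $A(0)=\max A$ and the decay at infinity, existence and uniqueness transfer to every $\omega>0$, and the inverse map is exactly formula \eqref{rep}.

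For part~3, I would run a barrier argument. Because $A_0\to 0$ at infinity, pick $R$ with $A_0^{p-1}\le \frac{1}{2}$ on $\{|x|\ge R\}$; there the equation yields $\triangle A_0=A_0\bigl(1-A_0^{p-1}\bigr)\ge \frac{1}{2}A_0$, so $A_0$ is a subsolution of the operator $L:=\triangle-\frac{1}{2}$. I would then compare against the radial supersolution $\bar w(x)=Ce^{-\delta|x|}$: one computes $L\bar w=C\bigl(\delta^2-(n-1)\delta/|x|-\frac{1}{2}\bigr)e^{-\delta|x|}$, which is $\le 0$ as soon as $\delta\le 1/\sqrt{2}$. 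Fixing such a $\delta$ and choosing $C$ large enough that $\bar w\ge A_0$ on $\partial B_R$, the maximum principle for $L$ (its zeroth-order coefficient being negative) applied to $A_0-\bar w$ on $\{|x|\ge R\}$ forces $A_0\le \bar w$ there; enlarging $C$ to dominate the continuous function $A_0$ on the compact ball $\overline{B_R}$ gives the stated estimate on all of $\mathbb{R}^n$.

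For part~2, I would combine the energy identity with the Pohozaev identity. Multiplying \eqref{omega} by $A$ and integrating over $\mathbb{R}^n$ gives $\int|\nabla A|^2+\omega\int A^2=\int A^{p+1}$, while multiplying by $x\cdot\nabla A$ and integrating gives $\frac{n-2}{2}\int|\nabla A|^2+\frac{n\omega}{2}\int A^2=\frac{n}{p+1}\int A^{p+1}$. Eliminating $\int|\nabla A|^2$ between the two yields $\omega\int A^2=\frac{2(p+1)-n(p-1)}{2(p+1)}\int A^{p+1}$. For a positive solution the left-hand side is strictly positive, whereas a short check shows the numerator $2(p+1)-n(p-1)$ vanishes at $p=p^*(n)=(n+2)/(n-2)$ and is negative for $p>p^*(n)$ when $n\ge 3$ (and is always positive when $n\le 2$, consistent with $p^*(n)=\infty$). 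Hence for $p\ge p^*(n)$ the right-hand side is $\le 0$, a contradiction, so no positive solution exists.

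The genuinely hard ingredient is the uniqueness in part~1, which I do not reprove but take wholesale from Kwong~\cite{K}. The subtlety I must handle on my own is that the Pohozaev computation presupposes that $\int A^2$, $\int|\nabla A|^2$, $\int A^{p+1}$ are finite and that the boundary integrals over expanding spheres vanish; this is the reason for proving part~3 first, since the barrier argument there applies verbatim to \emph{any} positive solution of \eqref{omega} that tends to $0$ at infinity, and the resulting exponential decay of $A$ (together with the decay of $\nabla A$ obtained from interior elliptic estimates) is precisely what justifies discarding the boundary terms.
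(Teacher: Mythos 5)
Your proposal is correct, and its skeleton coincides with the paper's: both treat the normalized problem \eqref{1} as the fundamental object, import the uniqueness of $A_0$ from Kwong~\cite{K}, and then transfer existence and uniqueness to arbitrary $\omega>0$ via the scaling bijection $A\mapsto\omega^{-1/(p-1)}A(\cdot/\sqrt{\omega})$, whose inverse is exactly \eqref{rep}; your computation fixing $\mu=\sqrt{\omega}$, $\lambda=\omega^{1/(p-1)}$ is the same argument the paper gives. Where you genuinely diverge is in parts~2 and~3: the paper disposes of both purely by citation (non-existence via the Pohozaev identity as in Pohozaev~\cite{POV} and Berestycki--Lions~\cite{B1}, exponential decay via~\cite{B1}, radial symmetry via Gidas--Ni--Nirenberg~\cite{G1,G2}), whereas you supply self-contained proofs --- a comparison/barrier argument with the supersolution $Ce^{-\delta|x|}$ for the decay, and an explicit derivation of the two integral identities, with the decay of part~3 (plus interior gradient estimates) invoked to kill the boundary terms. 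Your ordering $1\to 3\to 2$ is a real improvement in rigor over the paper, which never addresses the integrability and boundary-term issues implicit in the Pohozaev argument; both your identities and the sign analysis of the coefficient $2(p+1)-n(p-1)=(n+2)-(n-2)p$ are correct. One small caveat: you attribute uniqueness of $A_0$ to Kwong alone, but his theorem (as the paper itself notes) covers $n\ge 3$; for $n=2$ the paper cites McLeod--Serrin~\cite{MS} and for $n=1$ Berestycki--Lions~\cite{B1} (in one dimension uniqueness also follows from elementary phase-plane analysis), so your citation should be adjusted in low dimensions.
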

\begin{proof} 
The existence and non-existence part is due to the classical work of Pohozev~\cite{POV}.
The fact that for $p\ge p^*(n)$, \eqref{omega} does not have any solution follows from a well known identity of Pohozaev. 
Also see Berestycki and Lions~\cite{B1} and Berestycki, Lions and Peletier~\cite{B2}. 

The decay rate of $A_0$ is studied in Berestycki and Lions~\cite{B1}.

The classical work of Gidas, Ni and Nirenberg~\cite{G1,G2} tells us that all positive solutions are radially symmetric.

The uniqueness result of \eqref{1} is by Kwong~\cite{K} for $n\ge 3$, by Mcleod and Serrin~\cite{MS} for $n=2$, 
and by Berestycki and Lions~\cite{B1} for $n=1$. 
It is enough to prove uniqueness for \eqref{omega} with any $\omega >0$, and the representation formula~\eqref{rep}.

$A(x)$ defined by the formula \eqref{rep} is a solution to the problem \eqref{omega}.
On the other hand, for the solution $B(x)$ to \eqref{omega}, $B_0(x):=\omega^{-\frac{1}{p-1}}B(\frac{x}{\sqrt{\omega}})$ is a solution to \eqref{1} .
By the uniqueness theorem for \eqref{1}, this $B_0$ is equal to $A_0$ at any point.
This fact completes the proof.
\end{proof}

First we note that the problem \eqref{a} is equivalent to
\begin{equation*}    
 \begin{cases}
  \triangle A-\omega A+A^p=0,    \\
  \omega = 1+k\displaystyle \int_{\mathbb{R}^n} A^2 dx,   \\
  \displaystyle \lim_{\lvert x \rvert \to \infty} A(x)  =0,  \qquad  A(0)=\max_{x\in \mathbb{R}^n}A(x).
  \end{cases}
\end{equation*}

So the solution of \eqref{a} is of the form
\begin{equation*}
A(x)=\omega^{\frac{1}{p-1}} A_0(\sqrt{\omega}x),
\end{equation*}
with the consistency condition
\begin{align}
  \omega &= 1+k \int_{\mathbb{R}^n}\omega^{\frac{2}{p-1}} A_0(\sqrt{\omega}x)^2 dx  \notag  \\
         &= 1+k \int_{\mathbb{R}^n}\omega^{\frac{2}{p-1}-\frac{n}{2}} A_0(y)^2 dy.  \label{cons}
\end{align}
The relation \eqref{cons} is equivalent to
\begin{equation*}
k \alpha =(\omega -1) \omega^{\frac{n}{2}-\frac{2}{p-1}},
\end{equation*}
where 
\begin{equation*}
\alpha = \displaystyle \int_{\mathbb{R}^n} A_0(y)^2 dy.
\end{equation*}
Note that $0<\alpha<\infty$ from the fact 3. of the Lemma~\ref{lem}.

We analyze the function  
\begin{equation*}
f(\omega):=(\omega -1) \omega^{\frac{n}{2}-\frac{2}{p-1}}.
\end{equation*}
We define $e_{n,p}:=\dfrac{n}{2}-\dfrac{2}{p-1}$.
Five cases occur when we investigate the function $f(\omega)$ in $(0,\infty)$:
\begin{itemize}
\item[1]~~~$e_{n,p}+1<0$    \qquad  i.e.  $1<p<1+\dfrac{4}{n+2}$.                      
\begin{quotation}
$f(\omega)$ attains its maximum at $\omega_{n,p}:=\dfrac{-e_{n,p}}{-e_{n,p}-1}>0$, and the maximum is positive.

$f(+0):=\displaystyle \lim_{\omega \to 0}f(\omega)=-\infty$, \qquad $f(\infty):=\displaystyle \lim_{\omega \to \infty}f(\omega)=0$.
\end{quotation}
\item[2]~~~$e_{n,p}+1=0$      \qquad  i.e. $p=1+\dfrac{4}{n+2}$.                      
\begin{quotation}
$f(\omega)$ is increasing in $(0,\infty)$.  \qquad

$f(+0)=-\infty$,\qquad $f(\infty)=1$.
\end{quotation}
\item[3]~~~$0<e_{n,p}+1<1$    \qquad  i.e. $1+\dfrac{4}{n+2}<p<1+\dfrac{4}{n}$.     
\begin{quotation}
$f(\omega)$ is increasing in $(0,\infty)$.  \qquad

$f(+0)=-\infty$, \qquad $f(\infty)=\infty$.
\end{quotation}
\item[4]~~~$e_{n,p}+1=1$      \qquad  i.e. $p=1+\dfrac{4}{n}$.                       
\begin{quotation}
$f(\omega)$ is increasing in $(0,\infty)$.  \qquad

$f(+0)=-1$,\qquad $f(\infty)=\infty$.
\end{quotation}
\item[5]~~~$1<e_{n,p}+1$      \qquad  i.e. $1+\dfrac{4}{n}<p$.
\begin{quotation}
$f(\omega)$ attains its minimum at $\omega_{n,p}=\dfrac{e_{n,p}}{e_{n,p}+1}>0$, and the minimum is negative.

$f(+0)=0$, \qquad $f(\infty)=\infty$.
\end{quotation}
\end{itemize}
Following is the detailed statement of the theorem:
\begin{Thm}
Let the space dimension $n\ge 1$ be fixed.
\begin{itemize}
\item[1]~~~$1<p<1+\dfrac{4}{n+2}$.                      
\begin{itemize}
\item[1-1]~~~If~~$k>\dfrac{f(\omega_{n,p})}{\alpha}$, then the problem \eqref{a} does not have any solution.
\item[1-2]~~~If~~$k=\dfrac{f(\omega_{n,p})}{\alpha}$, then the problem \eqref{a} has exactly one solution.
\item[1-3]~~~If~~$\dfrac{f(\omega_{n,p})}{\alpha}>k>0$, then the problem \eqref{a} has two solutions.
\end{itemize}

\item[2]~~~$p=1+\dfrac{4}{n+2}$.                      
\begin{itemize}
\item[2-1]~~~If~~$k\ge 1$, then the problem \eqref{a} does not have any solution.
\item[2-2]~~~If~~$0<k<1$, then the problem \eqref{a} has exactly one solution.
\end{itemize}

\item[3]~~~$1+\dfrac{4}{n+2}<p<p^*(n)$.     
\begin{itemize}
\item[3-1]~~~For any $k>0$, the problem \eqref{a} has exactly one solution. 
\end{itemize}

\item[4]~~~$p^*(n)\le p$.
\begin{itemize}
\item[4-1]~~~For any $k>0$, the problem \eqref{a} does not have any solution. 
\end{itemize}
\end{itemize}      
Here a solution means a positive radial symmetric classical solution.  \label{Thm}
\end{Thm}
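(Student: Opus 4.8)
\emph{Proof strategy.} The substantive work is already contained in Lemma~\ref{lem} and in the reduction carried out above: a positive radial classical solution $A$ of \eqref{a} exists if and only if the scalar equation
\begin{equation*}
f(\omega)=k\alpha,\qquad f(\omega)=(\omega-1)\,\omega^{e_{n,p}},
\end{equation*}
admits a solution $\omega$, and by the consistency relation \eqref{cons} every such $\omega$ satisfies $\omega=1+k\int_{\mathbb{R}^n}A^2\,dx>1$. Since $\omega^{e_{n,p}}>0$ for all $\omega>0$, the constraint $\omega>1$ is exactly the region $\{f>0\}$, so it is automatic once $k\alpha>0$. The map $\omega\mapsto A(x)=\omega^{1/(p-1)}A_0(\sqrt{\omega}x)$ is injective on $\{\omega>1\}$, because $A(0)=\omega^{1/(p-1)}A_0(0)$ with $A_0(0)>0$ determines $\omega$; hence the number of solutions of \eqref{a} equals the number of solutions $\omega>1$ of $f(\omega)=k\alpha$. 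The plan is thus to reduce the whole theorem to counting the preimages of the positive number $k\alpha$ under $f$, using that $0<\alpha<\infty$ by part~3 of Lemma~\ref{lem}, settling the supercritical range first and then reading the count off the graph of $f$ in the remaining range.

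For $p\ge p^*(n)$ (case~4) no analysis of $f$ is needed: by part~2 of Lemma~\ref{lem} the problem \eqref{omega} has no positive solution for any $\omega>0$, and any solution of \eqref{a} would solve \eqref{omega} with $\omega=1+k\int_{\mathbb{R}^n}A^2\,dx$. Hence \eqref{a} has no solution for every $k>0$, as asserted.

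For $1<p<p^*(n)$ the solution $A_0$ and the constant $\alpha$ are available, and it remains to count solutions of $f(\omega)=k\alpha$ with $\omega>1$. I would compute $f'(\omega)=\omega^{e_{n,p}-1}\bigl[(e_{n,p}+1)\omega-e_{n,p}\bigr]$, locate the unique positive critical point $\omega_{n,p}=e_{n,p}/(e_{n,p}+1)$ whenever it is positive, and record the boundary behaviour $f(0^+)$ and $f(\infty)$; this reproduces the five shape cases already tabulated before the theorem. Reading off the level set $\{f=k\alpha\}\cap(1,\infty)$ then yields the count directly: in case~1 ($1<p<1+\tfrac{4}{n+2}$) the function $f$ is unimodal on $(1,\infty)$ with positive maximum $f(\omega_{n,p})$ and decays to $0$, so $f=k\alpha$ has no, one, or two roots according as $k\alpha>$, $=$, or $<f(\omega_{n,p})$, giving 1-1, 1-2, 1-3 (the equality case is a single root because the maximum is attained only at $\omega_{n,p}$); in case~2 ($p=1+\tfrac{4}{n+2}$) the function $f$ increases to the supremum $1$, so there is exactly one root iff $k\alpha<1$ and none when $k\alpha\ge 1$, giving 2-2 and 2-1; and in the three shape cases covering $1+\tfrac{4}{n+2}<p<p^*(n)$ the function $f$ is strictly increasing on $(1,\infty)$ from $0$ to $\infty$, hence $f=k\alpha$ has exactly one root for every $k>0$, giving case~3.

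The analytic difficulties—existence, uniqueness, radial symmetry, and exponential decay of $A_0$ (the last guaranteeing $0<\alpha<\infty$)—are all absorbed into Lemma~\ref{lem}, so no hard estimate remains. The only points demanding care are the exactness of the correspondence between solutions $\omega$ and solutions $A$ (in particular that the forced inequality $\omega>1$ coincides with $\{f>0\}$, so that no spurious or missing roots arise), and the bookkeeping of the five sign/shape cases for $f$ together with their correct grouping into the four cases of the theorem—most notably that the three shape cases $1+\tfrac{4}{n+2}<p<1+\tfrac{4}{n}$, $p=1+\tfrac{4}{n}$, and $1+\tfrac{4}{n}<p<p^*(n)$ all collapse to ``exactly one solution.'' I expect the threshold sub-case $k\alpha=f(\omega_{n,p})$ to be the only spot needing an explicit word, namely that the maximum of $f$ is attained at a single point, so its level set there is a singleton.
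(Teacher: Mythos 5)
Your proposal is correct and follows essentially the same route as the paper: the reduction of \eqref{a} to the scalar equation $f(\omega)=k\alpha$ (the paper's inner lemma) followed by reading the count of roots off the graph of $f$ in the five shape cases; your two additional observations (injectivity of $\omega\mapsto\omega^{1/(p-1)}A_0(\sqrt{\omega}x)$, and the fact that roots of $f(\omega)=k\alpha>0$ automatically satisfy $\omega>1$) simply make explicit what the paper's one-line conclusion ``the lemma and the graph of $f$ complete the proof'' leaves implicit. One caveat: in case 2 your (correct) analysis produces the threshold $k\alpha=1$, i.e.\ $k=1/\alpha$, whereas the theorem as printed states the conditions as $k\ge 1$ and $0<k<1$ with no $\alpha$; this is evidently a slip in the paper's statement (compare case 1, where the division by $\alpha$ does appear), but since the statement you are proving is the printed one, you should flag the discrepancy explicitly rather than assert without comment that ``$k\alpha\ge 1$'' gives 2-1 --- as written, the two conditions agree only if $\alpha=1$, which fails in general (e.g.\ for $n=1$ one computes $\alpha>1$ from the explicit $\operatorname{sech}$-type solution).
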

\begin{proof}[Proof of Theorem~\ref{Thm}]
For $p\ge p^*(n)$. Suppose there is a solution $A(x)$, for contradidtion.
Then $A$ is a solution of
\begin{equation}    
\triangle A-\omega A+A^p=0    \label{Shinji}
\end{equation}
 with 
\begin{equation}
\omega:= 1+k\displaystyle \int_{\mathbb{R}^n} A^2 dx.   \label{Kawano}
\end{equation}
This contradicts to the fact 2. of the Lemma~\ref{lem}.

From now on we concentrate on the case $1<p<p^*(n)$. 
\begin{Lem}
Suppose that for $k>0$, there exists $\omega>0$ such that $f(\omega)=k\alpha$. Then $A(x):=\omega^{\frac{1}{p-1}} A_0(\sqrt{\omega}x)$ is a solution to \eqref{a}.

On the other hand, for a solution $A(x)$ of \eqref{a} with $k>0$, there exists $\omega>0$ such that $f(\omega)=k\alpha$.
Moreover, $A$ is formulated with this $\omega$: $A(x)=\omega^{\frac{1}{p-1}} A_0(\sqrt{\omega}x)$.
\end{Lem}
\begin{proof}
First remember that for $A(x):=\omega^{\frac{1}{p-1}} A_0(\sqrt{\omega}x)$, 
$f(\omega)=k\alpha$
is equivalent to
$1+ \displaystyle \int_{\mathbb{R}^n} A^2dx = \omega$.

Now we prove the first statement.
The left hand side of the equation in problem \eqref{a} is calculated in the following way: 
\begin{align*}
  \triangle A- A+A^p-kA\displaystyle \int_{\mathbb{R}^n} A^2dx  
  &=  \triangle A- \left(1+ \displaystyle \int_{\mathbb{R}^n} A^2dx   \right) A+A^p  \\
  &=  \triangle A -\omega A + A^p  \\
  &= 0.
\end{align*}
Then we prove the last statement. A solution $A$ of \eqref{a} is a solution of \eqref{Shinji} with \eqref{Kawano}.
Here $\omega$ has to be positive, for otherwise the equation \eqref{Shinji} cannot have any solution.
From \eqref{Shinji} comes the formula $A(x)=\omega^{\frac{1}{p-1}} A_0(\sqrt{\omega}x)$,  
and the relation $f(\omega)=k\alpha$ is from \eqref{Kawano}.
\end{proof}
The above lemma and the observation of the graph of $f(u)$ completes the proof of Theorem~\ref{Thm}.
\end{proof}
\begin{Rem}
Note that any solution of our problem \eqref{a} is in the form \eqref{rep}: these solutions converges to $A_0$ pointwise as $k$ in \eqref{a} goes to zero.
\end{Rem}
\begin{Rem}
We can treat a more generalized problem in the same way:
\begin{equation*}
 \begin{cases}
  \triangle A- A+A^p-kA\displaystyle \int_{\mathbb{R}^n} A^rdx=0 \qquad  \text{in  $\mathbb{R}^n$},   \\
  \displaystyle \lim_{\lvert x \rvert \to \infty} A(x)  =0,  \qquad  A(0)=\max_{x\in \mathbb{R}^n}A(x),    
 \end{cases}
\end{equation*}
where r is positive. We need three cases to be dealt with if $n\ge 3$:
\begin{align*}
0<r<&\dfrac{2n}{n-2}, &
\dfrac{2n}{n-2}\le r&<\dfrac{2(n+2)}{n-2}, &
\dfrac{2(n+2)}{n-2}\le r. 
\end{align*}
Our $r=2$ is always in the first case, which is most bothersome. We shall omit the full observation. 
\end{Rem}

\end{document}